\numberwithin{equation}{section}
\newcommand{\mat}[1]{\left(\begin{matrix} #1 \end{matrix} \right)}  
\begin{document}

\author{Alex Kontorovich}
\thanks{Kontorovich is partially supported by NSF grant DMS-2302641, BSF grant 2020119, and a Simons Fellowship.}
\email{alex.kontorovich@rutgers.edu}
\address{Department of Mathematics, Rutgers University, New Brunswick, NJ}
\author{Xin Zhang}
\thanks{Zhang is partially supported by ECS grant 27307320 and NSFC grant 12001457.}
\email{xzhang@maths.hku.hk}
\address{Department of Mathematics, The University of Hong Kong, Pokfulam, Hong Kong}

\title[On the Local-Global Conjecture for  the Regular Pentagon]{On the Local-Global Conjecture for Combinatorial Period Lengths of Closed Billiards on the Regular Pentagon}

\begin{abstract}
We study the set of combinatorial lengths of asymmetric periodic trajectories on the regular pentagon, proving a density-one version of a conjecture of Davis-Lelievre.
\end{abstract}
\date{\today}
\maketitle

\section{Introduction}


By a polygonal billiard table, we mean a closed region in the plane bounded by a  polygon. Beginning with a point in the table and an initial direction, we imagine a straight line trajectory moving at unit speed continuing until reaching a wall of the polygon, whereupon the trajectory bounces off the wall with angle of incidence equal to angle of reflection and continues at unit speed. By convention, a trajectory stops if it strikes a vertex of the polygon. A closed billiard path is one that returns to its initial configuration in the tangent bundle after finite time. Its (geometric) period length is the Euclidean distance it travels until this first return. Its {\it combinatorial} period length is the number of walls of the polygon that are hit before first return.

 Polygonal billiards provide simple yet extremely rich examples of dynamical systems. When the polygonal billiards are rational, that is, all dihedral angles are rational multiples of $\pi$, one can say much more about the dynamical properties of these billiards, by studying their associated translation surfaces. For instance, if this surface is Veech, that is, if its symmetry group is a {\it lattice} in $\text{SL}_2(\mathbb R)$, then every trajectory is either closed or dense; moreover, the closed trajectories have quadratic asymptotic growth ordered by geometric period \cite{Veech1989}. Of particular interest are billiards on regular polygons \cite{Veech1992}; since the equilateral triangle and square tile the plane, the first  case of an ``exotic'' translation surface is the regular pentagon.   \par

The pentagonal billiard has been extensively studied in, e.g., \cite{DavisFuchsTabachnikov2011, DavisLelievre2018}. One question in particular raised in Davis-Lelievre's work is to determine the set of combinatorial period lengths of closed trajectories on the regular pentagon.
 
\begin{figure}[htpb]

	\includegraphics[width=0.3\textwidth]{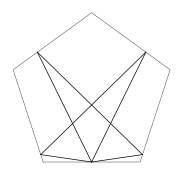}
	\includegraphics[width=0.3\textwidth]{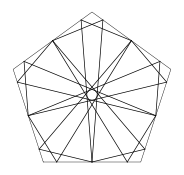}
	\includegraphics[width=0.3\textwidth]{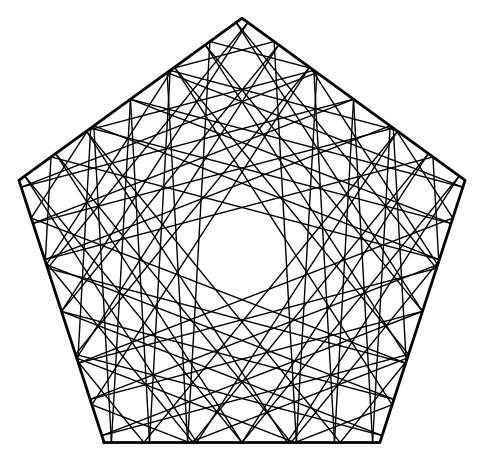}
	\caption{Periodic trajectories on a pentagonal billiard table with combinatorial period lengths 6, 20, 80, resp. Images are from \cite{DavisLelievre2018}.}
         \label{fig1}

\end{figure}

In \figref{fig1}, we show several periodic trajectories on the pentagon. The first is composed of 6 line segments, and hence has combinatorial period length 6. The second and third trajectories shown evidently have 5-fold rotational symmetry; we call such trajectories {\it symmetric}. The first image, by contrast, is  {\it asymmetric}. More details about such trajectories are reviewed in \secref{sec:billiards}.
After extensive numerical analysis, Davis-Lelievre were led to the following.

\begin{conj}[{\cite[Conjecture 4.3]{DavisLelievre2018}}]\label{conj:DL}
Every sufficiently large even number arises as an asymmetric combinatorial period length on the regular pentagon. More precisely, every positive even integer except  2, 12, 14 and 18 occurs.
\end{conj}
Our main result here is to establish  a density-one version of this conjecture.
\begin{thm}\label{thm:main1}
\conjref{conj:DL} is true in density, that is,
almost every even number arises. 

More precisely, the cardinality of the set of even numbers up to a parameter $X\to\infty$ which do not occur is bounded by $\ll X/(\log X)^{1/40}$.
\end{thm}

This problem can be naturally reformulated in the language of ``local-global phenomena on thin orbits,'' see \cite{Kontorovich2013} for more discussion. The underlying action is of a certain sub-semigroup, $\G^+$, of the $(2,5,\infty)$ Hecke triangle group, $\G$; see \secref{sec:billiards} below for details. The group $\G$ is a {\it lattice} (that is, acts discretely with finite covolume) for its action on $\SL_2(\R)$, but is ``thin''; that is, it has Zariski closure $\SL_2\times\SL_2$ (acting by the Galois conjugate in the second factor), and is an {\it infinite} index subgroup of the corresponding arithmetic group $\SL_2(\Z[\phi])$. Here $\phi=\frac{1+\sqrt5}2$ is the golden mean. For much more on the geometry and arithmetic of $\G$, and connections to Hodge theory, Teichm\"uller curves, and heights on Abelian varieties, see the recent works of McMullen \cite{McMullen2022, McMullen2023}.

What's most relevant for our purposes is the existence of a piecewise $\Q$-linear map 
$$
\sL:\G^+ \to \Z
$$ 
which takes an element $\g\in\G^+$ to the corresponding combinatorial period length $\sL(\g)$; then \conjref{conj:DL} asks to identify the image $\sL(\G^+)$ in $\Z$. There is an obvious ``local obstruction'' at the prime $2$, namely, that $\sL(\G^+)\subset 2\Z$. 
It is not hard to show that this is the only obstruction.
\begin{Def}
An integer $n$ is called {\it admissible} if $n\in\sL(\G^+)(\mod q)$, for all $q\ge1$.
\end{Def}
By an explicit analysis of the Strong Approximation property for $\G^+$ carried out in \secref{sec:strongApprox}, one can prove the following.
\begin{thm}\label{thm:Local}
An integer $n$ is admissible if and only if it is even.
\end{thm}

\begin{rmk}
One may think that, given this reformulation as a local-global problem in thin orbits, this problem is ripe for attack using the orbital circle method, as developed in, e.g., \cite{BourgainKontorovich2014a, BourgainKontorovich2014, FuchsStangeZhang2019}. 
Perhaps it could at least be used to improve \thmref{thm:main1} from a power of log savings on the exceptional set to a full power savings?
Indeed, the number of points in $\G^+$ in an archimedean ball grows quadratically (this is closely related to $\G$ being a lattice in $\SL(2,\R)$), and the map $\sL$ is linear, and hence one could expect that the {\it multiplicity} with which a large admissible integer occurs grows  linearly. (Compare this, say, with the local-global problem for Apollonian packings, where the average multiplicity of an admissible integer $n$ is only of order about $n^{0.3\dots}$; see, e.g., \cite{Kontorovich2013}.) With so many expected representations, not to mention the parabolic elements in $\G^+$, one may hope that the orbital circle method should perhaps be able to prove the full \conjref{conj:DL}.

But despite the many recent advances in this technology, we are sadly unable to apply it  in the present setting. 
One problem is that
 the semigroup $\G^+$ is just too thin:
 while the count for $\G^+$ in an archimedean ball grows quadratically, the ambient arithmetic group $\SL_2(\Z[\phi])$ has quartic growth. (See \cite{KontorovichZhang2024} where the authors are able to execute the orbital circle method with nearly square-root thinness in this sense.) 
Another major obstruction is that we still do not know the expander property for the family of  congruence quotients in $\G^+$  for {\it all} (not just square-free) moduli; this again is closely related to the Zariski closure being $\SL_2\times\SL_2$. 
\end{rmk}

Worse yet, it was recently discovered in \cite{HaagKertzerRickardsStange2023, RickardsStange2024} that the general local-global type statement in orbits may be false! It turns out that there can be further Brauer-Manin-type reciprocity obstructions in the orbit. Alas, our setting is no different; we show in \secref{sec:recip} the following.

\begin{thm}[{See \cite[Theorem 2.6]{RickardsStange2024}}]\label{thm:locGlobFail}
Let
\be\label{eq:gL}
\Lambda=\left\langle \mat{1&4\\0&1},  \mat{1&0\\4&1} \right\rangle^+
\ee
be generated as a semigroup in the given matrices, and let the linear form $\sL$ be given by:
$$
\sL\ : \ \gL\to \Z \ : \ \g\mapsto (1,0)\cdot \g\cdot (3,5)^t.
$$
Then the image
$$
\sL(\gL) \subset \Z
$$
has infinitely many local-global failures. In particular, $\sL(\gL)$
contains a density one subset of its admissibles, but it misses all the squares (which are not ruled out by congruence conditions). 
\end{thm}

In particular, unless the general orbital circle method  can somehow be made to distinguish between the settings in \conjref{conj:DL} and \thmref{thm:locGlobFail}, it will not succeed on this problem.
\\

Instead, our proof of \thmref{thm:main1} follows directly from an application of sieve methods, see \secref{sec:pfMain}. 
We show there that the set of asymmetric combinatorial periods contains the values of certain ternary cubic polynomials, which are linear in each variable.
(It is a rather interesting problem in its own right to understand how many admissible values may be missed from such a ternary cubic, and we discuss this question in \secref{sec:cubics}.)
Then some tools from  sieve theory show that such take on a density one set of admissible values, proving \thmref{thm:main1}.
Our crude method of producing periods is guaranteed to miss {\it at least} $\gg X/\log X$ even values up to $X$ (see \rmkref{rmk:missed}), and so other ideas are needed to go beyond a power of log savings on \conjref{conj:DL}.
Alas, as we show in \secref{sec:recip}, even this approach seems not to be able to avoid the possibility of reciprocity obstructions, see \corref{cor:cubicFail}.
\\

Some remarks:

\begin{rmk}
It would be interesting to investigate similar questions for combinatorial period lengths for other $n$-gons.
\end{rmk}

\begin{rmk}
In \cite{McMullen2023a},  McMullen shows that it is possible to filter periodic geodesics in an $n$-gon not only by whether they have rotational symmetry, but also by which edge midpoint gives a ``vertex connection,'' that is, a trajectory that begins and ends at a vertex. It would be interesting to investigate combinatorial period lengths filtered by this second invariant, which is also determined by a congruence condition. 
\end{rmk}

\subsection*{Acknowledgements} 
The authors are grateful to
Jayadev Athreya,
Henryk Iwaniec,
Curt McMullen,
Peter Sarnak,
and Kate Stange
for many comments and suggestions that improved this paper.
This paper was written while the first-named author was visiting Princeton
University; he would like to express his gratitude for their hospitality.
\\

\section{Combinatorial Periods on the Pentagon}\label{sec:billiards}

In this section, we record the relevant theorem 
on combinatorial periods
for our analysis.  To study periodic orbits in a regular pentagon, it is convenient to instead move to understanding saddle connections on an associated translation surface, the golden L, which is obtained by some transformations of the double pentagon. Here is the image in reverse:  \par

\begin{figure}[htbp]
	\centering
	\includegraphics[width=0.6\textwidth]{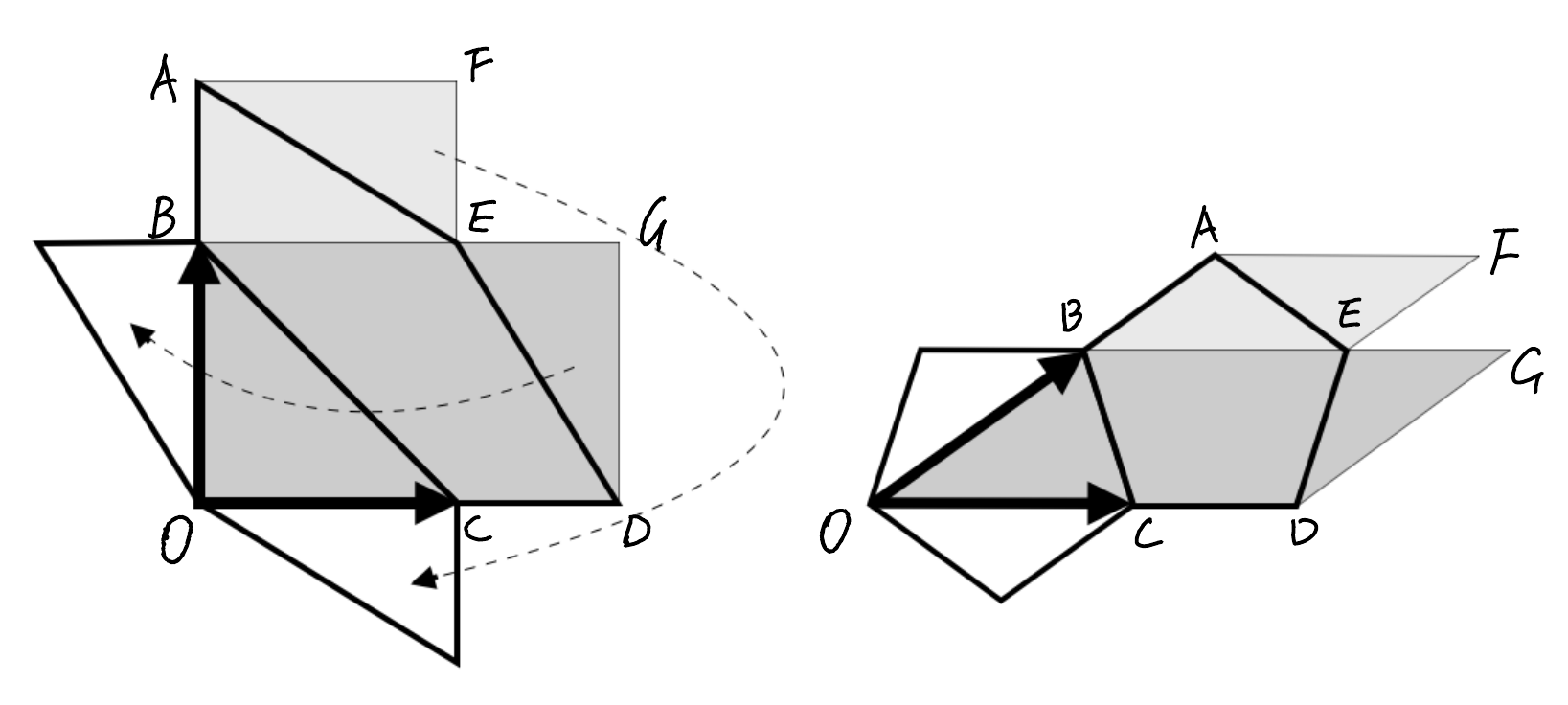}
	\caption{Transformations taking the golden L to the double pentagon. Image taken from \cite{DavisLelievre2018}.}
         \label{fig2}
\end{figure}

In particular, we cut and translate the triangles EGD and AFE from the golden L to form a sheared double pentagon, and then undo the shear by a linear transformation. Note that billiards on the pentagon correspond to geodesics on the double pentagon by the usual unfolding process, and geodesics are preserved under linear transformations and cut-and-translate operations.

The Veech symmetry group of golden L is $\Gamma:=\left\langle \mat{1&\phi\\0&1}, \mat{0&-1\\1&0} \right\rangle$. To parametrize periodic trajectories, it is more convenient to work instead with the semigroup: $$\Gamma^+=\left\langle \mat{1&\phi\\0&1}, \mat{1&0\\\phi&1}, \mat{\phi&\phi\\1&\phi},  \mat{\phi&1\\\phi&\phi} \right\rangle.$$ 
Each periodic direction in the golden L has a short and long saddle connection, and all long saddle connections with directions in the first quadrant are given by $\Lambda_L=\Gamma^+\cdot (0,1)^t$. The following theorem of Davis-Fuchs-Tabachnikov gives a relation between saddle connections in golden L and combinatorial period length in regular pentagon:

\begin{thm}[{\cite[Theorem 11]{DavisFuchsTabachnikov2011}}]\label{0606} Given a long saddle connection $(a+b\phi, c+d\phi)\in\Gamma^+\cdot (0,1)^t$ in the golden L, let $\ell=a+b+c+d$. The combinatorial period of the corresponding pentagon billiard trajectory is given either by $\sL=2\ell$ or $\sL=10\ell$, depending on whether $(d-b)+2(c-a)\equiv 0(\mod 5)$ or not (resp.), and corresponding to whether the associated trajectory is asymmetric or not.
\end{thm}

\proof[Sketch of proof]
A long saddle connection $(a+b\phi, c+d\phi)$ in the golden L starting from $O$ (as labeled in \figref{fig1}) must cross $ {EF}$ some number, $a$, times, $ {DG}$ $b$ times, $ {EG}$ $c$ times, and $ {AF}$ $d$ times, 
with $a$, $b$, $c$, and $d$ uniquely determined (due to the irrationality of $\phi$).
This implies the consecutive crossings of walls $ {BC}\rightarrow  {DE}$ $b$ times, $ {BC}\rightarrow  {AE}$ $d$ times, $ {CD}\rightarrow  {DE}$ $c$ times, and $ {AB}\rightarrow  {AE}$ $a$ times. So we have a total of $2(a+b+c+d)$ crossings.  On the regular pentagon table, the corresponding trajectory makes a counterclockwise angle $-\frac{2\pi}{5}$ when hitting $ {BC}$ followed by $ {AF}$, an angle $\frac{2\pi}{5}$ when hitting $ {BC}$ followed by $  {DE}$,  an angle $\frac{4\pi}{5}$ when hitting $ {CD}$ followed by $ {DE}$ and $-\frac{4\pi}{5}$ when hitting $ {AB}$ followed by $ {AE}$. The trajectory is already closed if and only if the total angle is an integer multiple of $2\pi$, that is, if $d-b+2(c-a)\equiv 0(5)$; these are the asymmetric trajectories. Otherwise, the trajectory's total angle is an integer multiple of $2\pi /5$ (but not of  $2\pi$), and hence the billiard has to continue for another four such trips in order to close up, increasing the length by a factor of $5$, and becoming symmetric. This explains \thmref{0606}.
\endproof

\ 

\section{Strong Approximation and Local Obstructions}\label{sec:strongApprox}

In this section, we work out the explicit local theory for the Hecke-5 semigroup $$\Gamma^+=\left\langle A=\mattwos1\phi01, B=\mattwos10\phi1, C=\mattwos\phi\phi1\phi, D=\mattwos\phi1\phi\phi \right\rangle$$ 
appearing in the previous section, and use this to prove  \thmref{thm:Local} on the local obstructions.\footnote{%
Strictly speaking, determining the local behavior of $\G^+$ is not necessary to identify the local obstructions for combinatorial periods, as it follows from our proof in \secref{sec:pfMain} of \thmref{thm:main1} that almost all even numbers are represented, and hence there can be no other local obstructions. But determining the reduction theory of $\G^+$ is of independent interest, so we include it here.}
Since we are interested in the local theory of $\Gamma^+$, it suffices to study the local theory of the group  $\Gamma = \<\Gamma^+\>$ generated by the semigroup, since they have the same modular reductions.
 For this purpose, it is convenient to view $\Gamma$ as a $\mathbb Q$-group.  Then the  $\Q$-points of its Zariski closure is $\text{SL}_2(\mathbb Q[\phi])$; let $G$ be the $\mathbb Z$ points of $\text{SL}_2(\mathbb Q[\phi])$. As usual, we write $\G(q)$ and $G(q)$ for the kernels of the mod $q$ reduction map, and $\G(\mod q)$ and $G(\mod q)$ for the images under this map.  The main result of this section is the following.
\begin{theorem}\label{1608} Let $q\in\mathbb Z$ have prime factorization $q=\prod p^e$. Then $\G(\mod q)$ is multiplicative:
$$
\G(\mod q)\cong \prod_p \G(\mod p^e).
$$
Moreover, set
 $q'=\normalfont\text{gcd}(q, 12)$.  Then 
$$\Gamma(q')/\Gamma(q)=G(q')/G(q).$$
So the local obstruction of $\Gamma$ (and hence $\Gamma^+$) is at $3$ and $2^2$.
Finally, we have that $\Gamma(\mod 12)$ is a subgroup of $G(\mod 12)$ of index $72$.
\end{theorem}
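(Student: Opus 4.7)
The plan is to treat $\Gamma$ as a Zariski-dense subgroup of the $\Q$-algebraic group $\mathbf{G}=\mathrm{Res}_{\Q(\sqrt{5})/\Q}\SL_2$, whose integer points are $G=\SL_2(\Z[\phi])$. Since $\mathbf{G}$ is simply connected and semisimple, with Zariski closure of $\Gamma$ (after base change to $\bar\Q$) equal to the split form $\SL_2\times\SL_2$, the Matthews--Vaserstein--Weisfeiler strong approximation theorem produces a finite ``bad'' set $S$ of rational primes outside of which the reduction $\Gamma\twoheadrightarrow G(\bmod p^e)$ is surjective for every $e\geq 1$. The theorem then reduces to three subtasks: (i) show $S\subseteq\{2,3\}$; (ii) explicitly compute the images $\Gamma(\bmod 4)\subseteq G(\bmod 4)$ and $\Gamma(\bmod 3)\subseteq G(\bmod 3)$; and (iii) assemble these via Goursat's lemma into the claimed global statement.

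For (i), I would verify by direct computation that the reductions of $A,B,C,D$ generate $\SL_2(\Z[\phi]/p)$ for each small candidate $p$. The splitting of $p$ in $\Z[\phi]$, governed by the Legendre symbol $(5/p)$, identifies the residue ring as $\mathbb{F}_p\times\mathbb{F}_p$ (when $p$ splits), $\mathbb{F}_{p^2}$ (inert), or $\mathbb{F}_5[\varepsilon]/\varepsilon^2$ at the ramified prime $p=5$. For $p\geq 5$, a Dickson/Nori-style generation argument applied to the transvections in the image gives surjection at level $p$ and bootstraps via the Lie-theoretic structure of $\SL_2$ to all $p^e$; the cases $p=2,3$ genuinely fail and produce proper subgroups. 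For (ii), since $|G(\bmod 4)|=|\SL_2(\mathbb{F}_4)|\cdot 4^3=3840$ and $|G(\bmod 3)|=|\SL_2(\mathbb{F}_9)|=720$ are both small, a breadth-first word enumeration in $A^{\pm 1},B^{\pm 1},C^{\pm 1},D^{\pm 1}$ computes $\Gamma(\bmod 4)$ and $\Gamma(\bmod 3)$ exhaustively, and the two indices in $G(\bmod 4)$ and $G(\bmod 3)$ should multiply to $72$. The cosets recorded in this enumeration simultaneously yield the local obstruction appearing in \thmref{thm:Local}.

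For (iii), the multiplicative decomposition $\Gamma(\bmod q)\cong\prod_p\Gamma(\bmod p^e)$ follows from Goursat's lemma once one knows that the images at distinct primes $p\neq p'$ share no common nontrivial quotients. For $p,p'\geq 5$ this is immediate, since $\SL_2$ over a finite field of order $\geq 4$ is perfect; at $p=2,3$ one checks directly that the small abelianizations of $\Gamma(\bmod p^e)$ are supported on disjoint primes. The main obstacle will be step (i), and in particular handling the ramified prime $p=5$: there $\Z[\phi]/5^e$ is not semisimple, so one cannot appeal directly to Chevalley-style generation results for $\SL_2$ over fields. Instead, the filtration by powers of the nilpotent ideal $(\phi-3)\subset\Z[\phi]/5^e$ must be traversed one graded piece at a time, producing the needed transvections on each level by explicit word calculations in $A,B,C,D$.
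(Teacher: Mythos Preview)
Your outline reaches the right conclusions by a more structural route than the paper's, which is entirely explicit. The paper never invokes MVW or Dickson--Nori; instead it (a) writes down explicit conjugates of the generators whose logarithms span $2\mathfrak{g}(\Z)$, giving the lift from level $p$ (or $4$) to $p^e$ by Hensel, and (b) produces a concrete word in $A,B$ whose $(2,1)$ entry is $48+80\phi$, which together with $B$ shows $\begin{pmatrix}1&0\\ \Z+\Z\phi&1\end{pmatrix}\subset\Gamma(\bmod p)$ for $p\nmid 6$, and then counts to see that the generated subgroup exceeds half of $G(\bmod p)$. Your Goursat argument for multiplicativity is actually more careful than the paper, which just says ``explicit computations.''

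Two points where your sketch would need tightening. First, the invocation of MVW is redundant: it only tells you \emph{some} finite $S$ exists, so you are doing the full work in step (i) anyway. Second, and more substantively, the ``Dickson/Nori on the transvections in the image'' step hides a real issue: the only transvections you start with are $E_{12}(\phi)$ and $E_{21}(\phi)$, whose powers give transvections by $\phi\Z$, not by all of $\Z[\phi]/p$. When $p$ is split (so $G(\bmod p)\cong\SL_2(\mathbb{F}_p)\times\SL_2(\mathbb{F}_p)$) or inert (residue field $\mathbb{F}_{p^2}$), you must still manufacture a second, $\Z$-independent transvection parameter before any classification theorem bites; this is exactly what the paper's $48+80\phi$ computation accomplishes, and it also disposes of the ramified prime $p=5$ uniformly, so your flagged ``main obstacle'' there largely evaporates once you take the explicit route.
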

\thmref{1608} follows from a series of lemmas. 
\begin{lemma} \label{1618}
Let $q=2^s\prod_{i=1}^N p_i^{n_i}\in\mathbb Z_+$, $p_i$ odd, and $q_0=2^s\prod_{i=1}^N p_i$. Then 
$$\Gamma(q_0)/\Gamma(q)=G(q_0)/G(q).$$
That is, no new local obstructions occur in odd prime powers.
\end{lemma}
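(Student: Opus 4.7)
The plan is to reduce the claim, via the Chinese Remainder Theorem applied to $\mathbb{Z}[\phi]/q\mathbb{Z}[\phi]$, to a single odd-prime statement: for every odd prime $p$ and every $n \ge 1$, the reduction map $\Gamma(p) \to G(p)/G(p^n)$ is surjective. Since the kernel of this map restricted to $\Gamma(p)$ equals $\Gamma(p^n)$, this yields $\Gamma(p)/\Gamma(p^n) \cong G(p)/G(p^n)$; taking a product over the odd primes dividing $q$ (while noting the $2$-part of $q$ and $q_0$ coincide and so contributes trivially to the quotient $G(q_0)/G(q)$) then gives the lemma.

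For the single-prime statement, induct on $n$, with $n=1$ trivial. For the inductive step, assuming $\Gamma(p)/\Gamma(p^n) \cong G(p)/G(p^n)$, it suffices to show that $\Gamma(p^n)$ surjects onto $G(p^n)/G(p^{n+1})$. Since $p$ is odd, the logarithm furnishes an isomorphism of abelian groups
\begin{equation}
G(p^n)/G(p^{n+1}) \;\xrightarrow{\ \sim\ }\; \mathfrak{sl}_2(\mathbb{Z}[\phi]/p\mathbb{Z}[\phi]), \qquad I + p^n Y \longmapsto Y \bmod p,
\end{equation}
and the $\Gamma$-conjugation action intertwines with $\mathrm{Ad}$ on the right. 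Because $A$ and $B$ are unipotent, one has $A^{p^n} = I + p^n \phi\, e$ and $B^{p^n} = I + p^n \phi\, f$ on the nose (with $e = E_{12}$, $f = E_{21}$), so these lie in $\Gamma(p^n)$ and map to $\phi e$ and $\phi f$ in the quotient. Forming the commutator $[A^{p^n}, B] \in \Gamma(p^n)$ gives, modulo $p^{n+1}$, the image $\phi^2\,[e, I + \phi f] = \phi^2 h$ up to a multiple of $\phi f$ already produced; conjugating $A^{p^n}$ and $B^{p^n}$ by the remaining generators $C$ and $D$ supplies further $\mathrm{Ad}$-translates. Using that $\phi$ (and hence $\phi^2$) is a unit in $\mathbb{Z}[\phi]$, one then checks that these contributions span $\mathfrak{sl}_2(\mathbb{Z}[\phi]/p\mathbb{Z}[\phi])$ as an $\mathbb{F}_p$-vector space, completing the induction.

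The main obstacle is the ramified prime $p = 5$: here $\mathbb{Z}[\phi]/5 \cong \mathbb{F}_5[\varepsilon]/(\varepsilon^2)$ with $\varepsilon = \phi - 3$, so $\mathfrak{sl}_2(\mathbb{Z}[\phi]/5)$ is not a simple module under $\mathrm{Ad}(\mathrm{SL}_2(\mathbb{F}_5))$ (it has the "infinitesimal" submodule $\varepsilon \cdot \mathfrak{sl}_2(\mathbb{F}_5)$), so one cannot appeal to abstract irreducibility of the adjoint representation. Instead I would carry out an explicit linear-algebra verification in $\mathfrak{sl}_2(\mathbb{F}_5[\varepsilon]/\varepsilon^2)$ using the identity $\phi^2 = \phi + 1$ to show that the six images $\phi e,\ \phi f,\ \phi^2 h,\ \mathrm{Ad}(C)(\phi e),\ \mathrm{Ad}(D)(\phi e),\ \mathrm{Ad}(C)(\phi f)$ already span, handling all odd primes uniformly. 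For inert and split odd primes the analogous check is routine since $\mathbb{Z}[\phi]/p$ is then a product of fields and the adjoint representation is absolutely irreducible.
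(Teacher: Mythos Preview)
Your approach is essentially that of the paper: show that $\Gamma(p^m)$ surjects onto $G(p^m)/G(p^{m+1})\cong\mathfrak{sl}_2(\mathbb{Z}[\phi]/p)$ by producing $A^{p^m},B^{p^m}$ and $\mathrm{Ad}(\Gamma)$-translates whose logarithms span. The paper, however, carries out the linear algebra \emph{integrally}: it exhibits explicit $\mathbb{Z}$-linear combinations of $\mathrm{Ad}(\Gamma)$-translates of $\phi E_{12},\phi E_{21}$ whose $\mathbb{Z}$-span already contains $2\mathfrak{g}(\mathbb{Z})$. This single computation then works uniformly for every odd prime, so your separate worry about the ramified prime $p=5$ disappears.

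Two small imprecisions in your write-up are worth flagging. First, your CRT reduction to a single prime tacitly assumes that $\Gamma\pmod{q}$ is multiplicative, which is part of what the section is establishing; the clean fix is to run the same lifting argument with $A^{r},B^{r}$ at a composite modulus $r$, filtering $q_0=r_0\mid r_1\mid\cdots\mid r_k=q$ by adjoining one odd prime factor at a time. Second, the fact that $\phi$ is a unit in $\mathbb{Z}[\phi]$ only buys you $(\mathbb{Z}[\phi]/p)$-spanning, not $\mathbb{F}_p$-spanning of the $6$-dimensional space $\mathfrak{sl}_2(\mathbb{Z}[\phi]/p)$; you still need two $\mathbb{F}_p$-independent elements in each of the $e$-, $f$-, and $h$-directions, which is precisely what the explicit $\mathrm{Ad}(C)$, $\mathrm{Ad}(D)$ computations (yours or the paper's) supply.
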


\begin{proof}
One basis for the Lie algebra $\mathfrak g$ of $G$ is given by 
$$\left\{a=\mattwos0100, b=\mattwos0\phi00, c=\mattwos0010, d=\mattwos00\phi0, e=\mattwos100{-1}, f=\mattwos\phi00{-\phi}\right\}$$
Write $\mathfrak g(\mathbb Z)=\mathbb Za\oplus \mathbb Zb\oplus \mathbb Zc\oplus \mathbb Zd\oplus \mathbb Ze\oplus \mathbb Zf$. Let $\frak g_1$ be the Lie algebra generated by $\Gamma(\{b,d \})$, the action here being the adjoint action.
For any prime $p$, any $m\geq 1$, and any $g\in G(p^m)/G(p^{m+1})$, we can write $g=I+p^m v$, where $v\in \mathfrak g(\mod p)$.  \par

We can compute
\begin{align*}
& g=BbB^{-1}=\mattwos{-1-\phi} \phi{-1-2\phi}{1+\phi}, \\
& h=CbC^{-1}=\mattwos{-1-\phi}{ 1+2\phi}{-\phi}{1+\phi}, \\
& i=DbD^{-1}=\mattwos{-1-2\phi} {1+2\phi}{-1-2\phi}{1+2\phi}.
\end{align*}
Since $\mathfrak g_1$ is invariant under transpose, $g^t, h^t, i^t \in \frak g_1$ as well. \par
Next note that 
\begin{align}
\label{1454}j=g+h-i-b+d=\mat{-1&0\\0&1},\\
\label{1455}k=h-2b+d-j=\mat{-\phi&1\\0&\phi},\\
\label{1456}l=g-b+2d-j=\mat{-\phi& 0\\-1& \phi}.
\end{align}
Therefore, 
\begin{align}
&m=k-l^t= \mat{0&2\\0&0}\in \frak g_1, \\
&n=m-2k= \mat{2\phi& 0\\0&-2\phi}\in \frak g_1.
\end{align}

Then $b, d, m, m^t, n, j$ generate a $\mathbb Z$-module that contains $2\mathfrak g(\mathbb Z)$. 
Therefore Hensel's lifting argument applies for any odd prime, and there are no local obstructions mod odd prime powers that are not due to an obstruction mod the prime itself.
\end{proof}

The next lemma deals with the prime 2. 
\begin{lemma}\label{1500}
For any $s\geq 2$, we have 
$$\Gamma (4)/\Gamma(2^s)= G(4)/G(2^s).$$
That is, there are no new obstructions at $p=2$ higher than $p^2$.
\end{lemma}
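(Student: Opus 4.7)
The plan is to extend the Lie-algebra/adjoint-orbit strategy of Lemma~\ref{1618} from odd primes to the prime $2$, starting at the congruence level $q=4$. The essential new input will be one explicit element of $\Gamma(4)$ that supplies the tangent direction not accessible from $\Gamma$-conjugates of $b$ and $d$ alone.

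First I would verify that the elements $A^{4}=I+4b$, $B^{4}=I+4d$, $BA^{4}B^{-1}=I+4g$, $CA^{4}C^{-1}=I+4h$, and $DA^{4}D^{-1}=I+4i$ all lie in $\Gamma(4)$ (normality of the congruence subgroup), and that under the canonical map $\gamma\mapsto (\gamma-I)/4\bmod 2$ they are sent to the residues of $b,d,g,h,i$ in $G(4)/G(8)\cong\mathfrak g(\mathbb F_{2})$. A short row reduction in the $\mathbb F_{2}$-basis $\{a,b,c,d,e,f\}$ shows that these five vectors span only the $5$-dimensional subspace cut out by the functional $v\mapsto v_{a}+v_{c}+v_{f}$. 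So conjugation alone does not suffice: one tangent direction, which can be taken as $f=\phi I$, is missed.

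The missing direction is captured by $(AB)^{5}$. Since $AB\bmod 2$ has order $5$ in $\mathrm{SL}_{2}(\mathbb F_{4})$, the power $(AB)^{5}$ already lies in $\Gamma(2)$; a direct matrix computation of $(AB)^{2}$, $(AB)^{4}$, and $(AB)^{5}$ in $M_{2}(\mathbb Z[\phi])$, reducing modulo $8$ at each step, sharpens this to the congruence
\[
(AB)^{5}\equiv (1+4\phi)\,I \pmod 8.
\]
In particular $(AB)^{5}\in\Gamma(4)$ and its image in $G(4)/G(8)$ is exactly $\phi I=f$. Combined with the previous five generators, this establishes the surjection $\Gamma(4)/\Gamma(8)\twoheadrightarrow G(4)/G(8)$.

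The full lemma then follows by induction on $s$, with trivial base case $s=2$. For the inductive step $s\to s+1$ it suffices to show surjectivity of $\Gamma(2^{s})\to G(2^{s})/G(2^{s+1})\cong\mathfrak g(\mathbb F_{2})$: given $v\in\mathfrak g(\mathbb F_{2})$, pick one of the six representatives $\xi_{v}\in\Gamma(4)$ above with $\xi_{v}\equiv I+4v\pmod 8$. In the five nilpotent cases $v\in\{b,d,g,h,i\}$ one has $v^{2}=0$, so $\xi_{v}^{2^{s-2}}=I+2^{s}v$ is exact; in the central case $v=\phi I$, iterated squaring of $\xi_{v}=(AB)^{5}$ yields the same congruence $\xi_{v}^{2^{s-2}}\equiv I+2^{s}\phi I\pmod{2^{s+1}}$, using at each step that the cross term $2^{2k+4}\phi^{2}I\equiv 0\pmod{2^{k+4}}$. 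Thus every $v$ is realized at the required level, and multiplying produces all of $\mathfrak g(\mathbb F_{2})$, closing the induction. The main technical hurdle is the explicit congruence $(AB)^{5}\equiv(1+4\phi)I\pmod 8$; once this collapse to a scalar matrix is verified, the remainder is standard Hensel lifting from level $4$.
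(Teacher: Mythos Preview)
Your argument is correct and follows the same overall architecture as the paper: reduce to showing that the tangent images in $G(2^{s})/G(2^{s+1})\cong\mathfrak g(\mathbb F_{2})$ are all hit, observe that the $\Gamma$-conjugates of $A^{4}$ and $B^{4}$ (equivalently, the adjoint orbit of $b,d$) only give a $5$-dimensional subspace mod~$2$, produce one explicit extra element of $\Gamma(4)$ to fill in the missing direction, and then Hensel-lift by repeated squaring.

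The one genuine point of divergence is the choice of that extra element. The paper uses the commutator $A^{2}B^{2}A^{-2}B^{-2}\equiv I+4\begin{pmatrix}\phi^{2}&0\\0&-\phi^{2}\end{pmatrix}\pmod 8$, whose tangent image is $e+f\in\mathfrak g(\mathbb F_{2})$; you instead exploit that $AB$ has order~$5$ in $\mathrm{SL}_{2}(\mathbb F_{4})$ and verify directly that $(AB)^{5}\equiv(1+4\phi)I\pmod 8$, giving tangent image $\phi I\equiv f$. Both choices complete the same $5$-dimensional span to all of $\mathfrak g(\mathbb F_{2})$. The commutator route has the advantage of a uniform formula $A^{2^{m}}B^{2^{n}}A^{-2^{m}}B^{-2^{n}}\equiv I+2^{m+n}\phi^{2}e\pmod{2^{m+n+\min(m,n)}}$ valid at every level, so the paper can quote it directly rather than re-square; your route trades this for a single clean scalar congruence and then handles the lifting of the non-nilpotent generator by the standard squaring estimate. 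Either way the inductive step closes, and your write-up of that step (distinguishing the nilpotent generators, where $\xi_{v}=I+4v$ exactly with $v^{2}=0$, from the scalar one) is in fact more explicit than the paper's.
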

\begin{proof} 
We have that
$$A^{2^m} B^{2^n} A^{-2^m}B^{-2^n}\equiv I + 2^{m+n}\mat{\phi^2&0\\0&-\phi^2}(\mod 2^{m+n+\min\{m,n\}})$$
For the special case $m=n=1$, we have
\begin{align}
\label{1450}ABA^{-1}B^{-1}\equiv I +  4\mat{\phi^2&0\\0&-\phi^2}(\mod 8).
\end{align}
Then using $\phi^2=\phi+1$, $A$, $B$, \eqref{1454}, \eqref{1455}, \eqref{1456}  together with \eqref{1450}, we have that
\begin{align}
\Gamma(4)/\Gamma(8)=G(4)/G(8).
\end{align}
Similarly, we obtain for any $s\geq 2$ that
\begin{align}
\Gamma(2^s)/\Gamma(2^{s+1})=G(2^s)/G(2^{s+1}),
\end{align}
from which the claim follows.
\end{proof}

Our next goal is to show the following.
\begin{lemma} \label{1619}
 For any square free $q$ with $(q, 6)=1$, we have $\Gamma(\mod q)=G(\mod q)$. 
\begin{proof} 
We first note that 
$$\mat{1&\phi\\0&1}\cdot \mat{1&0\\\phi&1}\cdot \mat{1&-\phi\\0&1}\cdot \mat{1&0\\-\phi&1}\cdot \mat{1&\frac{1}{4}\phi\\0&1}=\mat{4+4\phi&0\\1+2\phi&\frac{1}{2}-\frac{\phi}{4}}:=M$$
can be obtained by a product of the generators $A$ and $B$ (since $(q,2)=1$, the prime $2$ is invertible).
Therefore, we have 
$$M^{-1}\cdot \mat{1&0\\\phi&1}\cdot M=\mat{1&0\\48+80\phi&1}.$$
Using this element and $B$ shows that if $p\neq 2,3$ a prime, then 
$$\mat{1&0\\\mathbb Z+\mathbb Z\phi&1}(\mod p)\subset \Gamma(\mod p).$$
By closure under transpose, we have that
$$\mat{1&\mathbb Z+\mathbb Z\phi\\0&1}(\mod p)\subset \Gamma(\mod p).$$

From the computation 
$$\mat{1&x\\0&1}\cdot \mat{1&0\\y&1}\cdot \mat{1&z\\0&1}=\mat{1+xy& (1+xy)z+x\\y&1+yz},$$
we see that for any $\mat{a&b\\c&d}\in G(\mod p)$ with $c\neq 0$,  we have $$\mat{a&b\\c&d}\in G(\mod p) \in \Gamma(\mod p).$$  
Next we show that the group generated by $A$ and $B$ in $G(\mod p)$ has cardinality exceeding half the size of $G(\mod p)$, and hence is all of $G(\mod p)$.
Indeed, if $\mathbb Z[\phi]/(p)=P_1P_2$ for two different prime ideals of norm $p$, then $$|G(\mod p)|=p^2(p-1)^2(p+1)^2,$$ and the number of elements in $G(\mod p)$ with 2-1 entry invertible is $$p^4(p-1)^2> \frac{p^2(p-1)^2(p+1)^2}{2}$$ if $p\geq 5$. \par
 If instead $(p)$ remains prime, then $$|G(\mod p)|=p^2(p^2-1)(p^2+1),$$ and the number of elements in $G(\mod p)$ with 2-1 entry invertible is $$p^4(p^2-1)> \frac{p^2(p^2-1)(p^2+1)}{2}$$ if $p\geq 5$. \par
In either case, more than half of the group is generated this way; and hence $A$ and $B$ generate all of $G(\mod p).$
\end{proof}
\end{lemma}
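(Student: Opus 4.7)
By the Chinese Remainder Theorem applied to both $G$ and to $\Gamma$, it suffices to prove $\Gamma(\mod p) = G(\mod p)$ for each individual prime $p$ dividing $q$; since $(q, 6) = 1$, we have $p \geq 5$. Fix such a prime $p$. My plan is to show that $\Gamma(\mod p)$ contains the full upper and lower unipotent subgroups of $G(\mod p)$, then use a Bruhat-type factorization to cover more than half of $|G(\mod p)|$, and conclude by Lagrange's theorem.

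To generate the full lower unipotent subgroup $\{\mat{1 & 0 \\ t & 1} : t \in \mathbb Z[\phi]/p\}$, I would start from $B = \mat{1 & 0 \\ \phi & 1}$ and produce a second lower unipotent whose $(2,1)$-entry has a nonzero $\mathbb Z/p$-component (as opposed to being a pure $\phi$-multiple). The idea is to form a short word $M$ in $A^{\pm 1}, B^{\pm 1}$ that is lower-triangular mod $p$ with unit diagonal; then $M^{-1} B M$ is again lower unipotent, but its $(2,1)$-entry is a polynomial in the entries of $M$, which by design picks up an integer contribution of the form $m + n\phi \in \mathbb Z[\phi]/p$. Provided $m \not\equiv 0 \pmod p$ — which is where the hypothesis $(p, 6) = 1$ is used, since the construction of $M$ involves inverting small integers — the pair $\{B, M^{-1} B M\}$ spans the full lower unipotent over $\mathbb Z[\phi]/p$. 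For the upper unipotents, I would invoke closure of $\Gamma$ under transpose, which holds since $A^T = B$ and $C^T = D$ are among the generators.

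With both unipotent subgroups realized inside $\Gamma(\mod p)$, I would apply the identity $\mat{1 & x \\ 0 & 1}\mat{1 & 0 \\ y & 1}\mat{1 & z \\ 0 & 1} = \mat{1 + xy & (1+xy)z + x \\ y & 1 + yz}$ to conclude that every $g \in G(\mod p)$ with invertible $(2,1)$-entry lies in $\Gamma(\mod p)$. To finish, I would count: if $p$ splits in $\mathbb Z[\phi]$, then $|G(\mod p)| = p^2(p-1)^2(p+1)^2$ while the count of matrices with invertible $(2,1)$-entry is $p^4(p-1)^2$; if $p$ is inert, the respective orders are $p^2(p^2-1)(p^2+1)$ and $p^4(p^2-1)$; and at the ramified prime $p=5$ one filters through the nilpotent ideal $(\sqrt 5) \subset \mathbb Z[\phi]/5$ and checks the analogous inequality. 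In each case, for $p \geq 5$, the generic locus accounts for strictly more than half of $G(\mod p)$, so by Lagrange's theorem the proper-subgroup option is excluded and $\Gamma(\mod p) = G(\mod p)$.

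The main obstacle is the first step: explicitly locating a short word $M$ in $A^{\pm 1}, B^{\pm 1}$ whose conjugate $M^{-1} B M$ has a nonzero rational part in its $(2,1)$-entry. This is the one genuinely computational ingredient, and its success is precisely what forces the hypothesis $(q, 6) = 1$; everything downstream is essentially formal.
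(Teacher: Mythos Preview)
Your proposal is essentially the paper's own argument: manufacture via an explicit word $M$ a second lower unipotent whose $(2,1)$-entry has nonzero integer part (the paper obtains $48+80\phi$, which together with $B$ spans $\mathbb Z[\phi]/p$ precisely when $p\nmid 48$, i.e.\ $p\ne2,3$), invoke transpose symmetry for the upper unipotents, cover the big Bruhat cell via the three-unipotent identity, and finish by the $>\tfrac12$ counting. One small correction: you ask for $M$ lower-triangular with \emph{unit} diagonal, but then $M$ lies in the (abelian) lower unipotent subgroup and $M^{-1}BM=B$, which gains nothing; what is actually needed---and what the paper does---is a lower-triangular $M$ with non-unit diagonal entries $d_1,d_2$, so that $M^{-1}BM=\mat{1&0\\(d_1/d_2)\phi&1}$ picks up an integer part. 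You also rightly note the ramified prime $p=5$, which the paper's split/inert dichotomy glosses over but which satisfies the same inequality.
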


\thmref{1608} now follows from \lemref{1618}, \lemref{1500}, \lemref{1619}, and explicit computations.

\proof[Proof of \thmref{thm:Local}]
By \thmref{0606}, we need to understand the reduction of $\cO=\Gamma^+\cdot(0,1)^t$ mod $q$, which follows from knowing $\Gamma^+(\mod q)=\G(\mod q)$. It suffices to check at the ``bad'' moduli $q=2^2, 3,$ and  $5$ (the last modulus $p=5$ because it determines the values of $\sL=2\ell$ or $\sL=10\ell$) that there are no obstructions, and indeed an explicit calculation shows that a combinatorial period must be even, and this is the only local obstruction.
\endproof

Added in proof:  Calegari \cite{Calegari2024} has determined the adelic closure of triangle groups corresponding to all $n$-gons, of which our analysis is a special case.

\

\section{Proof of \thmref{thm:main1}}\label{sec:pfMain}

To begin the proof, we note that the bottom row vector of 
\be\label{eq:trilinearProd}
B^m A^n B^k=
\mattwos10 {m \phi} 1\cdot \mattwos1{n\phi} 01\cdot \mattwos10{k\phi}1 \in \Gamma^+
\ee 
is 
$$
(mnk+ (2mnk+m+k)\phi, 
mn+1+mn\phi)=:(a+b\phi, c+d\phi).
$$

By \thmref{0606}, we need to consider the quantity
\be\label{eq:trilinear}
\ell=\ell(m,n,k)=a+b+c+d= 3mnk+2mn+m+k+1,
\ee
so that the corresponding period length is either $\sL=2\ell$ or $\sL=10\ell$, depending on whether
\begin{align}\label{1550}
d-b+2(c-a)\equiv 
g(m,n,k):=
mnk+ 3mn - m-k +2 
\equiv 0(\mod 5) 
\end{align}
or not; this corresponds to whether the associated billiard is asymmetric or not.
We can ensure that the period is $2\ell$ (and the associated billiard is asymmetric), by fixing $k$ and restricting $m, n$ to certain progressions, while ensuring that $\ell$ can take all possible values mod $5$, as follows:
\begin{enumerate}
\item If we take $m=5m'+3, n=5n'+1, k=1$, then $g(m,n,k)\equiv 0(\mod 5)$ and 
$$\ell=
5(25 m' n' + 6 m' + 13 n') + 20
\equiv0(\mod 5).$$
\item If we take $m=5m'+4$, $n=5n'+3, k=1$, then $g(m,n,k)\equiv 0(\mod 5)$ and 
$$
\ell=
5(25 m' n' + 16 m' + 20 n') + 66 \equiv 1 (\mod 5).
$$
\item If we take $m=5m'+1, n=5n'+2, k=3$, then $g(m,n,k)\equiv 0(\mod 5)$ and 
$$\ell=
5(55 m' n' + 23 m' + 11 n') + 27
\equiv 2(\mod 5).
$$
\item If we take $m=5m'+1, n=5n'+3, k=0$, then $g(m,n,k)\equiv 0(\mod 5)$ and 
$$\ell =
5(10 m' n'+
 7 m' + 2 n') + 8
\equiv 3 (\mod 5).$$
\item If we take $m=5m'+2, n=5n'+2, k=1$, then $g(m,n,k)\equiv 0(\mod 5)$ and 
$$\ell=
5(25 m' n'+
 11 m' + 10 n') + 24
\equiv 4 (\mod 5).$$
\end{enumerate}
These conditions ensure that $\ell$ can cover all five residue classes mod $5$.
\thmref{thm:main1} will easily follow from the following.

\begin{lem} \label{0827}Let $l(x, y)= Axy+Bx+Cy$, with $A, B, C\in \mathbb Z_+,$ and $\gcd(A, B)=1$. Then $l$ represents a density one subset of positive integers; in fact, the proportion of $n\le X$ represented by $l$ is given by:
$$
\frac1X\#\{n \le X : \exists x,y,\text{ such that } n=l(x,y)\} \  = \  1 + O(\log^{-1/\varphi(A)}X),
$$
as $X\to\infty$. Here $\varphi$ is the Euler totient function.
\end{lem}
\begin{proof} Write 
$l(x, y)=\frac{(Ay+B)(Ax+C)-BC}{A}.$
It suffices to show that almost every $n\equiv BC(\mod A)$ can be written as $(Ay+B)(Ax+C)$.  
Thus it would be sufficient to show that almost every $n\equiv BC(\mod A)$ contains a prime factor $p\equiv B(\mod A)$; indeed, this would then determine the value of $y$, and $n/p\equiv C(\mod A)$ (since $A$ and $B$ are coprime) determines the value of $x$. We claim that the proportion of $n$ up to a growing parameter $X$ which have {\it no} prime factor $p\equiv B(\mod A)$ has cardinality $\ll X(\log X)^{-1/\varphi(A)}$.

To see this, let 
$$
f(n)=\bo_{\{\forall p\mid n, p\not\equiv B(\mod A)\}}$$ 
be the (multiplicative) indicator function in question,
and apply standard results in sieve 
bounds, for example \cite[Equation (1.85)]{IwaniecKowalski}, to see that
$$
\#\{n\le X : \forall p\mid n, p\not\equiv B(\mod A)\} =\sum_{n\le X}f(n)
\ll {X\over \log X} \prod_{p<X}\left(1+\frac{f(p)}p\right)
 \asymp X \prod_{p<X \atop p\equiv B(\mod A)}\left(1-\frac1p\right).
$$
As is classical, for any $B$ coprime to $A$, the product above is of order $(\log X)^{-1/\varphi(A)}$, as $X\to\infty$.
This completes the proof.
\end{proof}

\proof[Proof of \thmref{thm:main1}]
We apply \lemref{0827} to the various expressions for $\ell$ above. For example, when $\ell = 5(25m'n'+6m'+13n')+20 = 5l+20$, the lemma shows that $l$ represents all but $X(\log X)^{-1/20}$ numbers up to $X$, since 
$\phi(25)=20$. Therefore $\ell$ represents all but this many numbers $\equiv0(\mod 5)$ up to $X$. Our various expressions for $\ell$ cover each of the five residue classes mod $5$ while keeping $g(m,n,k)\equiv0(\mod 5)$, and hence staying within asymmetric trajectories. Since $\phi(55)=40$ and $\phi (10)=4$, we see that almost every even number (that is, value of $\sL=2\ell$) occurs as a combinatorial period, with a exceptional set of cardinality at most $\ll X(\log X)^{-1/40}$, as claimed.
\endproof

\begin{remark}\label{rmk:missed}
Note that our proof of \lemref{0827} is guaranteed to miss at least $\gg X/\log X$ possible values of $l$, simply because all primes $n\equiv BC(\mod A)$ are automatically excluded.
So this method of producing combinatorial billiards is guaranteed to miss at least this many values. In the next section, we discuss how one can try to do better.
\end{remark}

\section{Ternary Cubics Linear in Each Variable}\label{sec:cubics}

It is a question of independent interest  to investigate the values  on the {\it positive} integers taken by integral ternary cubics that are linear in each factor, such as that in \eqref{eq:trilinear}. 
Note that in our analysis in \secref{sec:pfMain}, we fixed one of the variables, and were left with a quadratic polynomial as in \lemref{0827}. One may wonder whether one can do much better by exploiting all three variables.\footnote{Of course one can also try to use more variables in \eqref{eq:trilinearProd}, creating quaternary quartics, say, linear in each variable, whose values are combinatorial period lengths. We have not been able to make much use of these higher degree forms, and it may anyway in general not be possible; see \rmkref{rmk:higher}.}

Let
$F$ be an integral ternary cubic, linear in each factor, with nonnegative coefficients,
$$F(x,y,z)=axyz+bxy+cyz+dxz+ex+fy+gz+h,$$
with $a,b,...,h\ge0$. We want to study its values on positive entries, $x,y,z\ge 1$. Thus for every given value of $z=1,2,3,…$, $F_z$ is a reducible (over $\Q$) quadratic:
$$
F_z(x,y) = Axy+Bx+Cy+D = ((Ax+C)(Ay+B)+AD-BC)/A.
$$

As $x,y$ range in the positive integers, the set of $n$ represented by $F_z$ are of the form
$$nA-(AD-BC)=(Ax+B)(Ay+C).$$
So $n$ (up to linear transformation) is a product of linear terms. This restricts the possible values of $n$.

The question is already very interesting in the case that
$$
F(x,y,z)=xyz+x+y+z.
$$ 
\begin{problem}
Determine the numbers positively represented by $F(x,y,z)=xyz+x+y+z.$
\end{problem}
Note that if we allow nonnegative values of $x,y,z$, then $F(0,0,z)=z$, so every number is already represented! So the question is really only interesting for {\it positive} values of the variables. 

Let 
$$
\cN:=F(\Z_{\ge1},\Z_{\ge1},\Z_{\ge1})
$$ be the values positively represented by $F$, and let 
$$
\cN^c(X) := \#[1,X]\setminus\cN
$$ 
be the number of values up to $X$ not represented by $F$.
Also let $\cN_z(X)$ be the number $n<X$ not represented by $F_j$, for all $j\le z$.

For $z=1$, we have that
$$
F_1(x,y)=xy+x+y+1.
$$
So if $n=F_1$, then $n=(x+1)(y+1)$. Therefore all composite numbers are represented already by $F_1$. But all primes are missed. So the set of values {\it not} represented by $F$ is already bounded by 
$$
\cN^c(X)
\ll X/\log X,
$$
but
$$
\cN_1^c(X) \gg X/\log X.
$$

Now suppose that $n$ is prime; we look at $z=2$. Then
$$
F_2(x,y)=2xy+x+y+2 = ((2x+1)(2y+1)+3)/2.
$$
Since all composite numbers are already represented by $F_1$, we only need to look at which primes are represented by $F_2$.
These are the primes ($n=p$) having representations as:
$$
2p-3=(2x+1)(2y+1).
$$
Any $p\ne 3$ for which $2p-3$ is composite is thus represented! So the only numbers $n$ {\it not} represented by either $F_1$ nor $F_2$ are those $n=p$ for which $2p-3$ is also prime. (These are of course Sophie Germain-type primes.) A classical upper bound sieve then shows that 
$$
\cN^c(X)\ll X/(\log X)^2,
$$
and twin-prime-type  conjectures (such as the Hardy-Littlewood tuples conjecture) suggest that
$$
\cN_2^c(X) \gg X/(\log X)^2.
$$

Next we look at $z=3$. Then
$$
F_3(x,y)=3xy+x+y+3 = ((3x+1)(3y+1) + 8)/3.
$$
The numbers represented by neither $F_1$, $F_2$, nor $F_3$   are those $n=p$ for which $2p-3$ is also prime, and for which
$$
3p-8=(3x+1)(3y+1)
$$
has no representation.
To be represented by $(3x+1)(3y+1)$, a number that is itself $\equiv1(\mod3)$ must either have a nontrivial prime factor $\equiv1(3)$ or, if not that, then at least three (and hence at least four) prime divisors that are $\equiv2(\mod3)$. 
Thus the numbers $3p-8$ not represented are either primes $\equiv1(\mod3)$ or products of two primes $\equiv2(\mod 3)$. 
The cardinality of $n<X$ such that $n=p$ is prime, $2p-3$ is prime, and $3p-8$ is a product of at most two primes can again be upper-bounded, leading to
$$
\cN^c(X) \ll {X \log\log X\over (\log X)^3}.
$$

Getting a lower bound on $\cN_3^c(X)$ is more subtle.
The Hardy-Littlewood tuples conjecture says that the cardinality of $n<X$ such that $n=p$ is prime, $2p-3$ is prime, and $3p-8$ is prime is  $\gg X/(\log X)^3$. 
The other case is that $n<X$ is prime, $n=p$, $2p-3$ is also prime, and $3p-8$ is the product of exactly two primes, both $\equiv 2(\mod 3)$.
Recall that the number of integers up to $X$ which are the product of exactly two primes is asymptotic to $X\log\log X/\log X$.
 Consider for an integer $n$ the three events: (1) $n$ is prime, (2) $2n-3$ is prime, and (3) $3n-8$ is a product of exactly two primes (both of which should be $\equiv2(\mod 3)$ about half the time). Assuming these events are roughly independent, the cardinality of such $n<X$ may be expected to be: 
 $$
 \asymp X\cdot \frac1{\log X} \cdot \frac1{\log X}\cdot \frac{\log\log X}{\log X}=X\log\log X/(\log X)^3.
 $$
 Therefore we might expect in total that 
 $$
 \cN_3^c(X)\overset{?}{\gg}{X\log\log X\over (\log X)^3}.
 $$

One may wonder whether it might be possible to continue an analysis of this form and prove a full local-global theorem for such ternary cubics. We show in the next section that in fact this is, in general, impossible, and there may be further (reciprocity) obstructions.

\section{Reciprocity Obstructions}\label{sec:recip}

In this section, we give a proof of Theorem \ref{thm:locGlobFail}; note that it  is also a consequence of \cite[Theorem 2.6]{RickardsStange2024}, with a similar proof.
We begin with the following simple observation.

\begin{lemma}\label{lem:gL}
Let 
\be\label{eq:gL}
\Lambda=\left\langle \mat{1&4\\0&1},  \mat{1&0\\4&1} \right\rangle^+
\ee
be generated as a semigroup in the given matrices. Then every vector $(a,b)^t$ in the orbit $\cO$ given by:
$$
\cO:= \Lambda\cdot (3, 5)^t
$$ 
satisfies $ \left( \frac{a}{b}\right)=-1$, where $ \left( \frac{\cdot}{\cdot}\right)$ is the Jacobi symbol. In particular, the entry $a$ can never be a perfect square.
\end{lemma}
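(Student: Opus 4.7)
The plan is to exhibit the Jacobi symbol $\left(\frac{a}{b}\right)$ as an invariant of the semigroup action, with initial value $-1$. Let $S=\left(\begin{smallmatrix}1&4\\0&1\end{smallmatrix}\right)$ and $T=\left(\begin{smallmatrix}1&0\\4&1\end{smallmatrix}\right)$ denote the generators of $\Lambda$.

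First I would verify the basic integrality hypotheses needed to make the Jacobi symbol well-defined and well-behaved. Starting from $(3,5)^t$, both entries are positive and odd; I would then check that both $S$ and $T$ preserve positivity (obvious, since all entries of $S$, $T$, and the starting vector are nonnegative) and oddness of both coordinates (since $S(a,b)=(a+4b,b)$ preserves the parity of $a$, and $T(a,b)=(a,4a+b)$ preserves the parity of $b$). Throughout the orbit, then, $(a,b)^t$ has odd positive entries and the Jacobi symbol $\left(\frac{a}{b}\right)$ is defined.

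Next I would compute the base case: by quadratic reciprocity and $\left(\frac{2}{3}\right)=-1$, one gets $\left(\frac{3}{5}\right)=\left(\frac{5}{3}\right)=\left(\frac{2}{3}\right)=-1$.

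The heart of the argument is showing that both generators preserve $\left(\frac{a}{b}\right)$. For $S$, this is immediate since $a+4b\equiv a\pmod b$, so $\left(\frac{a+4b}{b}\right)=\left(\frac{a}{b}\right)$. For $T$, one must show $\left(\frac{a}{4a+b}\right)=\left(\frac{a}{b}\right)$, which I would establish by applying quadratic reciprocity on each side. Writing
$$
\left(\tfrac{a}{4a+b}\right)\left(\tfrac{4a+b}{a}\right)=(-1)^{\frac{a-1}{2}\cdot\frac{4a+b-1}{2}},\qquad
\left(\tfrac{a}{b}\right)\left(\tfrac{b}{a}\right)=(-1)^{\frac{a-1}{2}\cdot\frac{b-1}{2}},
$$
and using $\left(\frac{4a+b}{a}\right)=\left(\frac{b}{a}\right)$, the ratio of the two is $(-1)^{\frac{a-1}{2}\cdot 2a}=1$, since the exponent contains the even factor $2a$. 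Hence $T$ preserves the symbol as claimed.

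Combining these three observations, $\left(\frac{a}{b}\right)=-1$ for every $(a,b)^t\in\cO$. In particular $a$ is a nonresidue modulo some prime divisor of $b$, and hence not a perfect square. The only step that requires care is the parity bookkeeping in the reciprocity calculation for $T$; once one confirms that both $a$ and $b$ remain odd positive throughout the orbit, everything else is formal.
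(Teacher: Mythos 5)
Your proposal is correct and follows essentially the same route as the paper: both arguments show that the Jacobi symbol $\left(\frac{a}{b}\right)$ is invariant under the two generators (via periodicity for one and quadratic reciprocity for the other, using that $4$ is divisible by $4$ so the reciprocity sign is unchanged) and then evaluate $\left(\frac{3}{5}\right)=-1$ at the base vector. Your write-up simply supplies the parity/positivity/coprimality bookkeeping that the paper leaves implicit.
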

\begin{proof}
Indeed,
by quadratic reciprocity (for $q_1, q_2$ odd),
 \begin{align}\label{1658}
\left( \frac{q_1}{q_2}\right)\left( \frac{q_2}{q_1}\right)=(-1)^{\frac{q_1-1}{2}\frac{q_2-1}{2}},
\end{align}
 the Jacobi symbol is preserved under the transformations $(q_1, q_2)\mapsto (q_1+4q_2, q_2)$ and $(q_1, q_2)\mapsto (q_1, 4q_1+q_2)$. (This is a key tool underlying the work in \cite{RickardsStange2024}.) But the base vector  $(3,5)^t$ satisfies
$$ 
\left( \frac{3}{5}\right)=-1.
$$ 
Therefore in fact all vectors in $\cO=\Lambda \cdot (3,5)^t$ have Jacobi symbol $-1$, and so the entries can never be squares. 
\end{proof}

This will allows us to easily prove \thmref{thm:locGlobFail}.
\proof[Proof of \thmref{thm:locGlobFail}]
Let $\gL$ be as in \eqref{eq:gL} and the linear form $\sL$ given by:
$$
\sL:\gL\to \Z:\g\mapsto (1,0)\cdot \g\cdot (3,5)^t.
$$
Denote the image
$$
S := \sL(\G) \subset \Z.
$$

Consider the quadratic polynomial
$$
Q(x,y)=(1,0)\cdot \mat{1&0\\4x&1} \cdot \mat{1&4y\\0&1} \cdot (3,5)^t=4(20xy+3x)+5.
$$
Applying \lemref{0827} (using $\gcd(20,3)=1$), we see that $Q$ already represents a density one set of $n\equiv1(\mod 4)$, which is the set of admissibles for $S$.
However, by \lemref{lem:gL}, no squares appear in $S$.
 \endproof

In particular, this means that the analysis discussed in \secref{sec:cubics} on ternary cubics also cannot, in general, succeed to prove a full local-global principle.

 \begin{cor}\label{cor:cubicFail}
 Let
 $F$ be the following ternary cubic, linear in each variable, and with non-negative coefficients:
  $$F(x,y,z)= (0,1)\cdot \mat{1&0\\4x&1}\cdot \mat{1&4y\\0&1}\cdot \mat{1&0\\4z&1}\cdot (3,5)^t=64xyz+16xy+4x+4z+1.$$
 Then $S:=\{F(x,y,z):x,y,z> 0\}$ has admissible set $\{n\in\mathbb Z: n\equiv 1(4)\}$, but $S$ misses all squares. 
 \end{cor}
 
 \begin{rmk}\label{rmk:higher}
Of course, this argument can be repeated for $n$-ary forms of degree $n$, suggesting that such elementary methods will not resolve \conjref{conj:DL}.
\end{rmk}

\begin{rmk}
Closely related issues of local-global failures in ternary cubic forms arise in \cite{GhoshSarnak2022}; see also \cite{Harpaz2017} for other potential ``beyond Brauer-Manin'' type obstructions.
\end{rmk}

\bibliographystyle{alpha}

\bibliography{AKbibliog}

\end{document}